\numberwithin{equation}{section}
\newtheorem{theorem}{Theorem}[section]
\newtheorem{lemma}[theorem]{Lemma}
\newtheorem{The main theorem}[theorem]{The main theorem}
\theoremstyle{definition}
\begin{document}

\begin{frontmatter}

\title {Local H\"older continuity of  solutions of the  complex Monge-Amp\`ere equation}

\author[label1]{Nguyen Xuan Hong}
\address[label1]{Department of Mathematics, Hanoi National University of Education, 136 Xuan Thuy Street, Cau Giay District, Hanoi, Vietnam} \ead{xuanhongdhsp@yahoo.com}

\author[label3]{Pham Thi Lieu} \address[label3]{Department of Basis Sciences and Foreign Languages,
People's Police University of Technology and Logistics, Bacninh, Vietnam} 


\begin{abstract} 
In this paper, we are interested in studying the Dirichlet problem for the complex Monge-Amp\`ere operator. We provide necessary and sufficient conditions for the problem to have H\"older continuous solutions. 
\end{abstract}

\begin{keyword}  
Dirichlet problem \sep   H\"older continuous \sep Monge-Amp\`ere \sep subsolution problem 
 
 \MSC[2010] 32U05 \sep 32U15 \sep 32U40
\end{keyword}

\end{frontmatter}

\section{Introduction}
The Dirichlet problem for the complex Monge-Amp\`ere operator is one of the important and central problems of pluripotential theory. It has a long history, starting in the 70s of the 20th century.  
In this paper we are interested in the H\"older continuity of the solution of the problem. More precisely, we seek a plurisubharmonic function $u$ defined on a  bounded domain $\Omega$ of $\mathbb C^n$ satisfying: 
\begin{align*}
\mathcal M(\Omega, \mu ): \ 
\begin{cases}  
	u  \text{ is H\"older continuous on } \overline{\Omega} ; \\
	(dd^c u)^n = \mu \ \text{in } \Omega; \\
  u =0 \  \text{on }  \partial \Omega.
\end{cases}
\end{align*} 

There are some known results for the existence for this problem due to   \cite{BT76},  \cite{GKZ08},  \cite{Li04b}, etc.   In those papers, the authors only consider a simple case, that is, the measure $\mu$ only in 
$$
\mu_f:=fdV_{2n},\  f\geq 0 .
$$ 
The first result in this direction tells us that the problem $\mathcal M(\Omega, \mu_f )$ is solvable if $\Omega$ is strictly pseudoconvex and   $f^{1/n}$ is H\"older continuous (see \cite{BT76}). Therefore,  strictly pseudoconvex domains have been widely accepted as the standard domain in which we can analyze the Dirichlet problem for the complex Monge-Amp\`ere operator. This problem is further studied in \cite{BT82}, where the authors showed the existence of a continuous solution if $f$ is continuous.
Later, some other authors also generalized the result above. 
There is a very important result due to Guedj, Ko\l odziej and Zeriahi     that the problem is solvable for class of measure $\mu_f$ with $f\in L^p$, $p>1$ (see  \cite{GKZ08}). Recently, several other authors have used the technique of \cite{GKZ08} to study   the problem $\mathcal M(\Omega, \mu_f )$ for some subclasses of the class of pseudoconvex domains (see \cite{BKPZ},  \cite{Ch15},  \cite{HT}, etc).
In the case of manifolds, the problem is studied by \cite{DSHKZ}, \cite{Hie10} and some other authors. 

When $\mu$ is arbitrary, 
the problem becomes much more complicated. This problem remained open up until recently.
In the recent paper \cite{DGZ16}, Dinew, Guedj and   Zeriahi posed the question of the existence of H\"older continuous solutions of  Dirichlet problem (see Question 17 in \cite{DGZ16}). 
This question  was partially solved by Cuong \cite{Cuong18} in 2018 and fully completed in 2020 (see \cite{Cuong20}). 

The main purpose of this paper is to study the conditions which the problem can be solved. 
To study this, we will first find the  properties of the measure that have local sub-solutions. Here,  a function is called sub-solution to $\mathcal M (\Omega,\mu)$ if it is a solution to $\mathcal M (\Omega,\nu)$  with some measure $\nu\geq \mu$. Our first main result   is the following theorem.
\begin{theorem}
\label{thm1}
Let $\Omega$ be a bounded domain in $\mathbb C^n$ and let   $\mu$ be a non-negative Borel measure in $\Omega$. Asume that  for every $z\in \overline{\Omega}$, there exists $r_z>0$ such that   $\mathcal M(\Omega \cap \mathbb B(z,r_z), \mu )$  has a negative sub-solution.  Then, there exists a H\"older continuous function $u$ on $\overline{\Omega}$ such that it is plurisubharmonic in $\Omega$ and satisfies 
$$(dd^c u)^n \geq \mu \text{ on } \Omega.$$   
\end{theorem}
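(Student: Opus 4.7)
The plan is to pass from the given local subsolutions to a single global Hölder continuous subsolution via a finite covering, a barrier-gluing construction, and a final summation. By compactness of $\overline{\Omega}$ and the hypothesis, pick points $z_1,\dots,z_N$ and radii $r_i := r_{z_i}$ so that the half-balls $B_i' := \mathbb{B}(z_i, r_i/2)$ already cover $\overline{\Omega}$. Write $B_i := \mathbb{B}(z_i, r_i)$ and $\Omega_i := \Omega \cap B_i$, and let $v_i$ be the given negative H\"older continuous sub-solution on $\Omega_i$: a plurisubharmonic function on $\Omega_i$, H\"older continuous on $\overline{\Omega_i}$, vanishing on $\partial \Omega_i$, and satisfying $(dd^c v_i)^n \ge \mu$ on $\Omega_i$.

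The naive extension of $v_i$ by $0$ outside $\Omega_i$ is H\"older continuous on $\overline{\Omega}$ but is \emph{not} plurisubharmonic in general (its submean inequality fails along $\partial B_i \cap \Omega$). To restore plurisubharmonicity, I would glue against a smooth strictly plurisubharmonic barrier. Set
\[
\phi_i(z) := A_i\bigl(|z-z_i|^2 - r_i^2/2\bigr),
\]
and take $A_i$ large enough (in terms of $\sup|v_i|$ and $r_i$) that $\phi_i < v_i$ on $\overline{B_i'} \cap \Omega$ while $\phi_i > 0 = v_i$ along $\partial B_i \cap \overline{\Omega}$. Since $v_i \to 0$ at $\partial B_i \cap \Omega$, the standard max-gluing principle yields that
\[
\tilde v_i := \begin{cases} \max(v_i, \phi_i) & \text{on } \Omega_i,\\ \phi_i & \text{on } \Omega \setminus B_i \end{cases}
\]
is plurisubharmonic on $\Omega$; equivalently, $\tilde v_i = \max(v_i^{*}, \phi_i)$ where $v_i^{*}$ is the continuous extension of $v_i$ by zero, which makes H\"older continuity of $\tilde v_i$ on $\overline{\Omega}$ immediate. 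On $B_i' \cap \Omega$ one has $\phi_i < v_i$, hence $\tilde v_i = v_i$ and consequently $(dd^c \tilde v_i)^n \ge \mu$ on $B_i' \cap \Omega$.

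Finally, set $u := \sum_{i=1}^N \tilde v_i$. This is plurisubharmonic on $\Omega$, H\"older continuous on $\overline{\Omega}$, and bounded. Expanding $(dd^c u)^n$ via the multinomial theorem and discarding the mixed wedge products (all nonnegative Bedford--Taylor currents) gives $(dd^c u)^n \ge \sum_i (dd^c \tilde v_i)^n$. For a Borel set $E \subset \Omega$, using the cover $\overline{\Omega} \subset \bigcup_i B_i'$,
\[
(dd^c u)^n(E) \;\ge\; \sum_{i=1}^N (dd^c \tilde v_i)^n(E \cap B_i' \cap \Omega) \;\ge\; \sum_{i=1}^N \mu(E \cap B_i' \cap \Omega) \;\ge\; \mu(E),
\]
which is the Monge--Amp\`ere inequality we want.

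The delicate step is the barrier choice: $\phi_i$ must simultaneously sit strictly below $v_i$ on a ball $B_i'$ that participates in a cover of $\overline{\Omega}$, and strictly exceed the vanishing boundary values of $v_i$ on $\partial B_i$, so that the max-gluing produces a plurisubharmonic $\tilde v_i$ inheriting $(dd^c \tilde v_i)^n \ge \mu$ on $B_i' \cap \Omega$. Once this is arranged, the remaining ingredients --- preservation of H\"older continuity under max and finite sums on a bounded domain, and the Monge--Amp\`ere comparison via the multinomial expansion --- are standard.
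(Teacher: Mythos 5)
Your proof is correct, but it takes a genuinely different route from the paper. The paper extends each local sub-solution $u_j$ by zero and then glues all the pieces into a \emph{single regularized maximum}: it needs the Coltoiu--Mihalache convex reparametrization $\tau$ (Lemma 2.1) to force $|\tau(u_j-\varepsilon)-\tau(u_k-\varepsilon)|<1$ on overlaps, cutoff functions $\chi_{2j-1},\chi_{2j}$ plus a strictly plurisubharmonic correction $\varphi$ to arrange that near $\partial\mathbb B(z_j,3r_j)$ the $j$-th competitor drops out of the max, and the chain-rule bound $(dd^c\tau(u_j-\varepsilon))^n\ge(\tau')^n(dd^cu_j)^n$, which costs a factor $\delta^n$ recovered by rescaling at the end. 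You instead promote each local piece \emph{separately} to a global plurisubharmonic function $\tilde v_i=\max(v_i^*,\phi_i)$ by maxing against the explicit quadratic barrier $\phi_i=A_i(|z-z_i|^2-r_i^2/2)$, which is strictly positive on $\partial B_i$ (so the gluing across $\Omega\cap\partial B_i$ is the standard one, using that $v_i\to 0$ there) and lies strictly below $v_i$ on $B_i'$ once $A_i>4\sup|v_i|/r_i^2$ (so $\tilde v_i=v_i$, hence $(dd^c\tilde v_i)^n\ge\mu$, on $B_i'\cap\Omega$); you then sum and invoke superadditivity $(dd^c\sum_i\tilde v_i)^n\ge\sum_i(dd^c\tilde v_i)^n$ from the multinomial expansion and positivity of mixed Bedford--Taylor products. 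All the ingredients you use are standard and the covering/measure-theoretic bookkeeping at the end is sound, so the argument is complete; it is more elementary than the paper's (no analogue of Lemma 2.1 is needed) and it yields $(dd^cu)^n\ge\mu$ with no multiplicative loss. What the paper's max-gluing buys in exchange is a construction that stays closer to the local data (the output is locally one of the reparametrized $u_j$'s rather than a sum of $N$ barriers), a technique that transfers to situations where summing is not an option; for the statement as given, your route suffices.
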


The above theorem provides us with a condition to be able to solve the problem. For the problem to be solved on the domain $\Omega$, we need to add the geometry of  $\Omega$, specifically, we require it to be strictly pseudoconvex. 
Our second main result   is the following theorem  on the locality of the problem.

\begin{theorem}
\label{thm2}
Let $\Omega$ be a  strictly pseudoconvex domain in $\mathbb C^n$ and let   $\mu$ be a non-negative Borel measure in $\Omega$. Then, the problem  $\mathcal M(\Omega, \mu )$  is local,
i.e., it is solvable on $\Omega$ if only if for every $z\in \overline{\Omega}$, there exists $r_z>0$ such that it is solvable on $\Omega\cap \mathbb B(z,r_z)$.
\end{theorem}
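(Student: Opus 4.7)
My plan is to prove each direction of the equivalence by reducing to Cuong's 2020 characterization \cite{Cuong20}: on a strictly pseudoconvex domain, $\mathcal M(\Omega,\mu)$ is solvable if and only if it admits a H\"older continuous sub-solution (in the sense used in this paper, namely a solution of $\mathcal M(\Omega,\nu)$ for some $\nu\ge\mu$).

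For the forward direction (global solvability implies local solvability), let $u$ solve $\mathcal M(\Omega,\mu)$; by the maximum principle, $u\le 0$. Fix $z\in\overline\Omega$ and choose $r_z>0$ so small that $\Omega_z:=\Omega\cap\mathbb B(z,r_z)$ is hyperconvex (a strictly plurisubharmonic defining function of $\Omega$ combined with $|w-z|^2-r_z^2$ provides a continuous plurisubharmonic exhaustion). For $A>0$, set
$$\varphi_z(w):=u(w)+A\bigl(|w-z|^2-r_z^2\bigr);$$
this is plurisubharmonic and H\"older continuous on $\overline{\Omega_z}$, is $\le 0$ on $\partial\Omega_z$, and by multilinearity of $(dd^c\cdot)^n$ satisfies $(dd^c\varphi_z)^n\ge(dd^c u)^n=\mu$. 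Then the Perron upper envelope
$$v_z:=\bigl(\sup\{\psi\in PSH(\Omega_z):\psi\le 0\text{ on }\partial\Omega_z,\ (dd^c\psi)^n\ge\mu\}\bigr)^*$$
is plurisubharmonic on $\Omega_z$, vanishes on $\partial\Omega_z$ (by hyperconvexity and the sub-barrier $\varphi_z$), and solves $(dd^c v_z)^n=\mu$ by standard pluripotential-theoretic arguments; the H\"older sub-barrier $\varphi_z$ forces H\"older continuity of $v_z$, so $v_z$ solves $\mathcal M(\Omega_z,\mu)$.

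For the reverse direction, which is the main content, local solvability in particular provides local H\"older sub-solutions, so Theorem~\ref{thm1} produces a function $u$, H\"older continuous on $\overline\Omega$, plurisubharmonic in $\Omega$, with $(dd^c u)^n\ge\mu$. After subtracting $\max_{\overline\Omega}u$ I may assume $u\le 0$. With $u$ playing the role of a global H\"older sub-barrier, I form
$$\tilde u:=\bigl(\sup\{\phi\in PSH(\Omega):\phi\le 0\text{ on }\partial\Omega,\ (dd^c\phi)^n\ge\mu\}\bigr)^*.$$
Using the strict pseudoconvexity of $\Omega$ together with Ko\l odziej-type stability estimates, I show $\tilde u\in PSH(\Omega)\cap C^{0,\alpha}(\overline\Omega)$ for some $\alpha>0$, with $\tilde u|_{\partial\Omega}=0$ and $(dd^c\tilde u)^n\ge\mu$. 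Thus $\tilde u$ is a H\"older sub-solution to $\mathcal M(\Omega,\mu)$, and Cuong's theorem \cite{Cuong20} converts it into a H\"older continuous solution of $\mathcal M(\Omega,\mu)$.

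The main obstacle in both directions is the same: upgrading a H\"older plurisubharmonic sub-barrier ($\varphi_z$ locally, $u$ globally) to a genuine H\"older sub-solution which vanishes on the boundary, via the Perron upper envelope combined with stability estimates for the complex Monge-Amp\`ere operator. This is exactly where the strict pseudoconvexity of $\Omega$ (or hyperconvexity with controlled geometry of $\Omega_z$) is crucial, and where the argument is expected to run parallel to the strategy of \cite{Cuong20}.
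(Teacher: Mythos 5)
Your overall skeleton agrees with the paper's: reduce to Cuong's characterization (Theorem B of \cite{Cuong20}) by producing a H\"older continuous sub-solution, and obtain a global H\"older plurisubharmonic function dominating $\mu$ by feeding the local solutions (made negative by adding a suitable negative H\"older psh function) into Theorem~\ref{thm1}. Up to that point the reverse direction matches the paper.

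The genuine gap is in how you convert the output $u$ of Theorem~\ref{thm1} --- H\"older, psh, $(dd^c u)^n\ge\mu$, but with uncontrolled boundary values --- into an actual sub-solution of $\mathcal M(\Omega,\mu)$, i.e.\ one vanishing on $\partial\Omega$. You propose to take the global Perron envelope $\tilde u$ and assert, via unspecified ``Ko\l odziej-type stability estimates,'' that $\tilde u\in C^{0,\alpha}(\overline\Omega)$ with $\tilde u|_{\partial\Omega}=0$. That step is not routine: H\"older continuity of this envelope for an arbitrary measure dominated by $(dd^c u)^n$ \emph{is} the H\"older continuous subsolution problem, i.e.\ the content of \cite{Cuong20} itself, so the argument is circular (and if you could prove it directly, invoking Cuong's theorem afterwards would be superfluous --- the envelope would already be the solution). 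Even the claim $\tilde u|_{\partial\Omega}=0$ is not automatic: the competitors are only required to be $\le 0$ on $\partial\Omega$, and $u$ itself need not tend to $0$ there, so one must exhibit a member of the family that does --- which is exactly the object you are trying to build. The paper's fix is the one idea your proposal lacks and it is elementary: since $\Omega$ is strictly pseudoconvex and $-\varphi|_{\partial\Omega}$ is H\"older, Theorem 1.2 of \cite{Li04b} provides a H\"older continuous psh function $\psi$ on $\overline\Omega$ with $\psi=-\varphi$ on $\partial\Omega$; then $\varphi+\psi$ is H\"older, psh, vanishes on $\partial\Omega$, and $(dd^c(\varphi+\psi))^n\ge(dd^c\varphi)^n\ge\mu$, so it is the required sub-solution and Theorem B of \cite{Cuong20} concludes. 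The same criticism applies to your forward direction, where ``the H\"older sub-barrier forces H\"older continuity of $v_z$'' again quietly assumes the subsolution theorem on $\Omega\cap\mathbb B(z,r_z)$ rather than a standard envelope fact (the paper records this direction as immediate, but your write-up presents an argument whose key step is unjustified).
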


The remainder of this paper is organized as follows. In Section 2, we  prove an  auxiliary lemma and use it to prove Theorem \ref{thm1}. Section 3 is devoted to the proof of Theorem \ref{thm2}.

\section{Measure with local sub-solutions}
Some elements of pluripotential  theory  that will be used  throughout  the paper can be  found  in \cite{ACCH}-\cite{Kol98}.  To prove Theorem \ref{thm1}, we need the following result on  a special class of smooth increasing convex functions.  

\begin{lemma}
\label{l2}
Let $f_j:( -\infty, 0) \to ( - \infty, 0)$, $1\leq j \leq m$, be increasing  functions such that  
$$
\lim_{x\to 0} f_j(x) = 0.
$$
Then,   for every $\varepsilon\in (0, 1)$ there exists a smooth increasing convex function $\tau_\varepsilon : ( -\infty, -\varepsilon)\to \mathbb R$ such that 

(a)  $|\tau_\varepsilon \circ (f_j +x) -\tau _\varepsilon\circ (f_k +x)| <1 $  on $(-\infty,0)$, $\forall x\leq - \varepsilon$;

(b) $\tau_\varepsilon '(x)\geq \delta_\varepsilon>0$, $\forall x\in (-\frac{1}{\varepsilon}, -\varepsilon)$. 
\end{lemma}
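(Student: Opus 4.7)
The plan is to construct $\tau_\varepsilon$ by an explicit formula, exploiting the observation that condition (a) is automatically satisfied as soon as the range of $\tau_\varepsilon$ on $(-\infty,-\varepsilon)$ is contained in an interval of length less than $1$. My first candidate would therefore be the exponential $\tau_\varepsilon(y) := e^{y}$ restricted to $(-\infty,-\varepsilon)$, since this is the prototypical smooth, increasing, convex function whose range is a bounded interval that collapses to $0$ as $y\to -\infty$.

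Having fixed this $\tau_\varepsilon$, the bulk of the work is just verification. Smoothness is immediate, $\tau_\varepsilon'(y)=e^{y}>0$ gives monotonicity, and $\tau_\varepsilon''(y)=e^{y}>0$ gives convexity, so the structural properties required by the lemma are in place. For (a), I would fix $t\in(-\infty,0)$ and $x\leq -\varepsilon$; since each $f_j(t)<0$ by hypothesis, both $f_j(t)+x$ and $f_k(t)+x$ lie in $(-\infty,-\varepsilon)$, so the values $e^{f_j(t)+x}$ and $e^{f_k(t)+x}$ both belong to the interval $(0,e^{-\varepsilon})$. Their difference in absolute value is therefore at most $e^{-\varepsilon}$, and $e^{-\varepsilon}<1$ because $\varepsilon>0$. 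For (b), one only needs to note that $\tau_\varepsilon'(x)=e^{x}\geq e^{-1/\varepsilon}$ for every $x\in(-1/\varepsilon,-\varepsilon)$, so the choice $\delta_\varepsilon:=e^{-1/\varepsilon}>0$ works.

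I do not anticipate a genuine obstacle in this argument; the only conceptual step is recognizing that the hypotheses really demand nothing more than a smooth, strictly increasing, convex function on $(-\infty,-\varepsilon)$ whose total oscillation there is strictly less than $1$, and the exponential is the cleanest such choice. It is worth noting that the assumptions that the $f_j$ are monotone and satisfy $\lim_{x\to 0}f_j(x)=0$ are not actually used in the verification above (only the negativity $f_j<0$ is); these stronger hypotheses presumably enter when the lemma is applied to glue local subsolutions into a global one in the proof of Theorem~\ref{thm1}. If a more flexible construction turned out to be needed by the application, one could instead build $\tau_\varepsilon$ as $\tau_\varepsilon(y)=C+\int_{-\infty}^{y}\varphi(s)\,ds$ for a smooth nondecreasing $\varphi\geq 0$ with $\int_{-\infty}^{-\varepsilon}\varphi(s)\,ds<1$, tuning $\varphi$ to match any additional quantitative requirements, but the exponential already suffices to establish the lemma as stated.
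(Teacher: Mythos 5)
Your proof is correct: $\tau_\varepsilon(y)=e^{y}$ is smooth, strictly increasing and convex on $(-\infty,-\varepsilon)$, the arguments $f_j(t)+x$ do lie in that domain, the oscillation of $e^{y}$ there is less than $e^{-\varepsilon}<1$, which gives (a), and $e^{x}\ge e^{-1/\varepsilon}$ on $(-1/\varepsilon,-\varepsilon)$ gives (b). This is, however, a genuinely different and far more elementary route than the paper's. The paper imports the construction of Lemma~2 of Coltoiu--Mihalache: it chooses points $a_j\nearrow 0$ interlacing the values of the $f_k$, builds from them a piecewise-linear increasing convex $\tau$ adapted to the $f_j$ whose compositions $\tau\circ f_j$ differ by at most a constant $A$, proves a Lipschitz-type lower bound $|\tau(x)-\tau(y)|\ge |x-y|/|d_1|$, and finally mollifies and rescales by $1/(2A)$ to obtain smoothness, (a) and (b). That machinery is what one needs when, as in Coltoiu--Mihalache, $\tau$ must in addition blow up at $0$ (to produce exhaustion functions); your observation that the present statement imposes no such unboundedness, so that any smooth increasing convex function of total oscillation less than $1$ with derivative bounded below on $(-1/\varepsilon,-\varepsilon)$ works, is accurate, and you correctly note that the monotonicity of the $f_j$ and the condition $\lim_{x\to 0}f_j(x)=0$ are never used. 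Since in the application (the proof of Theorem~\ref{thm1}) the subsolutions $u_j$ are uniformly bounded below by $-1/\varepsilon$ and the only quantitative inputs are (a), (b) and the nonnegativity of $\tau$ (which $e^{y}$ also satisfies), your choice would serve there as well; the only price is a very small constant $\delta_\varepsilon=e^{-1/\varepsilon}$, versus the paper's $1/(2A|d_1|)$, which affects the final rescaling but not correctness.
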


\begin{proof} 
Let $\{a_j\}_{j=0}^\infty$ be  an increasing sequence of negative real numbers such that $a_j \to 0$ and 
\begin{align*}
b_{j}:& =\max_{1\leq k\leq m}  f_k(a_{j})
\\&   <\min _{1\leq k\leq m  } f_k(a_{j+1})=: c_{j+1}.
\end{align*}
Set  
$$
d_j:=
\begin{cases}
c_1-\frac{2}{\varepsilon} & \text{ if } j=1 \\
b_j& \text{ if } j \in \{2,4,6, \ldots\} \\
c_{j}& \text{ if } j \in \{3,5,7, \ldots\}.
\end{cases}
$$
We infer by the hypotheses that $d_j \nearrow 0$ as  $j \nearrow+\infty$. 
Let  $\tau: ( -\infty, 0)\to \mathbb R$ be  defined by  
$$
\tau  (x):=
\begin{cases}
n- \left( \frac{d_2}{d_1} +\cdots+\frac{d_n}{d_{n-1}}   \right)- \frac{x}{d_{n}}  & \text{ if } d_{n}<x\leq d_{n+1} ,\\
0 &\text{ if } x\leq d_1.
\end{cases}
$$ 
Proof of Lemma 2 in \cite{CM89} tells us that $\tau$ is continuous, increasing,  convex and satisfies 
\begin{equation}
\label{e4''}
\left| \tau  \circ f_j -\tau  \circ f_k \right|  \leq  A \text{ on } (-\infty,0), \ \forall 1\leq j,k \leq m.
\end{equation}
Here, $A>0$ is a constant.  
We now claim that 
\begin{equation}
\label{e7}
|\tau(x)-\tau(y)|\geq \left| \frac{ x-y }{d_1} \right| , \ \forall x,y\in (d_1,0).
\end{equation}
Indeed,  since $0<|d_n|\leq |d_1|$, we infer by the definition of $\tau$ that 
\begin{equation}
\label{e6}
\begin{split}
|\tau(x)-\tau(y)|  
& = \left| \frac{x-y}{d_n} \right| \\& \geq \left| \frac{x-y}{d_1} \right| , \ \forall x,y \in [d_n  , d_{n+1}].
\end{split}
\end{equation} 
Now, we assume that  
$$d_n<x\leq d_{n+1}$$
 and $$ d_{n+k} <y \leq d_{n+k+1}, \ k\geq 1.$$ 
Using the inequality  \eqref{e6} we have 
\begin{align*}
& \tau(y)-\tau(x)  
\\& = [\tau(y)-\tau(d_{n+k}) ] + \sum_{s=n+2}^{n+k}  [\tau(d_{s})-\tau(d_{s-1})]  + [\tau(d_{n+1})-\tau(x)] 
\\& \geq  \frac{ y- d_{n+k }}{-d_1}   +  \sum_{s=n+2}^{n+k}  \frac{d_{s} -d_{s-1}}{-d_1}    +  \frac{d_{n+1} - x  }{-d_1} 
\\& =\frac{y-x }{-d_1}.
\end{align*}
This proves the claim. 
Let $\rho$ be a smooth non-negative function in $\mathbb R$ such that $\rho(\lambda )=0$, $\forall |\lambda |\geq 1$ and 
$$
\int_{\{|\lambda|<1\} } \rho(\lambda ) d\lambda =1.	
$$
We define 
$$
\tau_\varepsilon(x):=  
\frac{1}{2A} \int_{\{|\lambda |<1\}} \tau(x-\varepsilon \lambda ) \rho(\lambda ) d\lambda , \   x \in (-\infty, -\varepsilon).
$$
Then, $\tau_\varepsilon$ is  smooth, increasing, convex on $(-\infty,-\varepsilon)$. 
Moreover, we deduce by \eqref{e7} that 
$$
|\tau_\varepsilon (x)-\tau_\varepsilon (y)|\geq \left| \frac{ x-y }{2A d_1} \right| , \ \forall x,y\in (d_1+\varepsilon,-\varepsilon).
$$
This implies that 
$$
\tau'_\varepsilon (x) \geq  \frac{ 1 }{2A d_1} =:\delta _\varepsilon , \ \forall x \in \left  (-\frac{1}{\varepsilon}, - \varepsilon \right )
$$
because 
$$d_1+\varepsilon<-\frac{1}{\varepsilon}.$$
Therefore, it remains to verify that  
\begin{equation}
\label{4'}
|\tau_\varepsilon \circ (f_j + x ) -\tau _\varepsilon\circ (f_k + x)| <1 , \ \forall x\leq -\varepsilon.
\end{equation}
Indeed, assume that  $|\lambda|<1$. It is clear that 
$$
x  -\varepsilon   \lambda \leq -\varepsilon( 1+\lambda) <0.
$$
From the convexity of $\tau$ we  infer by \eqref{e4''} that
\begin{align*}
& \left|\tau \circ (f_j +x  -\varepsilon   \lambda )   -\tau \circ (f_k +x  -\varepsilon   \lambda )\right| 
\\& \leq \left| \tau  \circ f_j -\tau  \circ f_k \right|  \leq A. 
\end{align*}
Hence,
\begin{align*}
& \left| \tau_\varepsilon \circ (f_j +x ) -\tau _\varepsilon\circ (f_k +x ) \right| 
\\& \leq  \frac{1}{2A} \int_{\{|\lambda|<1\}}    \left|  \tau(f_j +x  -\varepsilon \lambda ) -  \tau(f_k +x  -\varepsilon \lambda ) \right|   \rho(\lambda ) d\lambda
\\& \leq  \frac{1}{2} \int_{\{|\lambda|<1\}}     \rho(\lambda ) d\lambda < 1
\end{align*}
and \eqref{4'} is proved. 
This proves the lemma.
\end{proof}

We now able to give the proof of theorem \ref{thm1}. Techniques used  
come from  \cite{CM89}.

\begin{proof}[Proof of theorem \ref{thm1}] 
Since $\overline{\Omega}$ is a compact set,  we can find  $z_1,\ldots, z_m\in \overline \Omega$ and $r_1,\ldots, r_m>0$ such that $\overline{\Omega} \subset \bigcup_{j=1}^m \mathbb B(z_j, r_j)$ and   $\mathcal M (\Omega\cap \mathbb B(z_j, 3r_j) , \mu)$ has a  negative sub-solution  $u_j$. 
Define 
\begin{equation}
\label{e1''1}
u_j=0 \text{ on } \mathbb C^n \backslash (\Omega\cap \mathbb B(z_j,3 r_j)).
\end{equation} 
Then, $u_j$ are H\"older continuous on $\mathbb C^n$.  
For  $1\leq j,k \leq m$, we set $$G_{j,k}:=\Omega\cap \mathbb B(z_j, 3r_j) \cap \mathbb B(z_k,3r_k)
$$ 
and
$$f_{j,k}(x) =
\begin{cases} \inf\{u_j(z):z\in G_{j,k}\cap \{ u_k \geq x\} \} & \text{ if } G_{j,k} \neq \emptyset , 
\\
x & \text{ if } G_{j,k} = \emptyset.
\end{cases}
$$ 
It is easy to see that $f_{j,k}$ are increasing functions  and 
$$\lim_{x\to 0} f_{j,k} (x)= 0. $$   

\noindent 
Let $\varepsilon\in (0, 1)$ be such that 
\begin{equation}
\label{e1'1}
u_j >  - \frac{1}{\varepsilon}, \ \forall j=1,\ldots,m.
\end{equation}
Lemma \ref{l2} tells us that   there exists a smooth increasing convex function $\tau : ( -\infty, -\frac{\varepsilon}{2})\to \mathbb R$ such that:

(a)  $|\tau  (x-\varepsilon) -\tau (f_{j,k}(x) -\varepsilon) |< 1$, $\forall x<0$;  

(b) $ \tau'(x) \geq \delta>0$, $\forall x\in (-\frac{2}{\varepsilon } , -\frac{\varepsilon}{2})$. 

\noindent 
Assume that $z\in G_{j,k}$. Since $f_{j,k} (u_k(z)) \leq u_j(z)$, we deduce by (a) that 
\begin{equation}
\label{e4}
|\tau  \circ (u_j(z) -\varepsilon) -\tau  \circ (u_k (z)-\varepsilon) |< 1.
\end{equation}  

\noindent Let $\chi_1, \ldots, \chi_{2m}$ be smooth functions in $\mathbb C^n $  such that  
$$0\leq \chi_{2j-1} \leq \chi_{2j}  \leq 1$$  and 
\begin{align*}
\mathbb B  (z_j, r_j)
&  \Subset \{\chi_{2j-1} =1\}   \Subset \{\chi_{2j-1} \neq 0\} 
 \\&  \Subset\mathbb B  (z_j, 2r_j)  
\Subset \{\chi_{2j} =1\}
 \\&  \Subset \{\chi_{2j} \neq 0\} \Subset {\mathbb B} (z_j, 3r_j)  
\end{align*}
for all $j=1,\ldots, m$. We infer by   \eqref{e4}    that
\begin{equation*}
\label{e5}
\chi_{2j-1} + \chi_{2j} \tau(u_j-\varepsilon)<  \chi_{2k-1} + \chi_{2k} \tau(u_k-\varepsilon)
\end{equation*} 
on $\Omega  \cap \mathbb B(z_k, r_k) \cap (\mathbb B(z_j, 3r_j ) \backslash  \mathbb B(z_j, 2r_j))$.  Hence,  
\begin{equation}
\label{e6'}
\chi_{2j-1} + \chi_{2j} \tau(u_j-\varepsilon)< \chi_{2k-1} + \chi_{2k} \tau(u_k-\varepsilon) 
\end{equation}  

\noindent 
on   $\mathbb B(z_k, r_k)  \backslash  \mathbb B(z_j, 2r_j)$.
Let $\varphi$ be a smooth strictly plurisubharmonic function in $\mathbb C^n$ such that 
$\varphi+\chi_{2j-1}$   is  plurisubharmonic in $\mathbb C^n$ for all $j=1, \ldots, m$. We define 
$$v:=\max_{1\leq j\leq m} v_j \ \text{ in } \mathbb C^n,$$ 
where  
$$v_j:=   \varphi+ \chi_{2j-1} + \chi_{2j} \tau(u_j-\varepsilon).
$$
Now, for $z\in \Omega \cap \mathbb B(z_k,r_k)$, we set 
$$
J_k(z) = \{ j\in \{1,\ldots,m\}:   z\in \mathbb B(z_j, 2 r_j)\}.
$$ 
From \eqref{e6'} we have 
$$v_j(z)<v_k(z), \ \forall j\not \in J_k(z).$$
This implies that 
$$
U_k (z):= \Omega \cap \mathbb B(z_k,r_k) \cap \bigcap_{j\in J_k(z)}  \mathbb B(z_j,2r_j) \cap \bigcap_{1\leq j\leq m, \ j\not \in J_k(z)} \{v_j<v_k\}
$$
is a open neighborhood of $z$. 
Since $\varphi+ \chi_{2j-1}$ are plurisubharmonic  and   $$
v_j = \varphi+ \chi_{2j-1} +\tau(u_j-\varepsilon) \text{  on }  U_k(z),
$$ 
it follows that  $v_j \in PSH(  U_k(z))$ and 
\begin{align*}
(dd^c v_j)^n & \geq (dd^c \tau( u_j-\varepsilon))^n
\\&  \geq [\tau'(u_j-\varepsilon)]^n (dd^c u_j)^n  
\text{ on } U_k(z).
\end{align*}
Therefore, we deduce from  (b)  that 
$v \in PSH(U_k(z))$
and 
\begin{equation}
\label{e8}
(dd^c v)^n  \geq \delta^n  \mu  \text{ on } U_k(z)
\end{equation}
because 
$$u_j-\varepsilon> -\frac{2}{\varepsilon} \text{ on }U_k(z).$$  
Now, since  $\tau$ is smooth on $(-\frac{2}{\varepsilon}, -\frac{\varepsilon}{2})$, we obtain by \eqref{e1''1} and \eqref{e1'1} that 
$$\tau\circ (u_j-\varepsilon) \text{ are  H\"older continuous in } \mathbb C^n.$$   
Combining this with  \eqref{e8} we arrive  that   $v$ is H\"older continuous on $\overline{\Omega}$, plurisubharmonic on $\Omega$ and satisfies 
$$
(dd^c v)^n \geq  \delta^n  \mu \text{  on } \Omega.
$$
Then, function $u:=  \delta  v$ satisfies the requirements of the theorem, and the proof is complete. 
\end{proof}

\section{Local H\"older continuity}

\begin{proof}
[Proof of theorem \ref{thm2}]
The necessity is obvious. We prove the sufficiency. 
Since $\Omega$ is strictly pseudoconvex, by Theorem B in \cite{Cuong20}, 
we need to prove that $\mathcal M(\Omega,\mu)$ has a sub-solution. 
To achieve this, we will use Theorem \ref{thm1}, so we have to show that there exist local sub-solutions. 
Let $\rho$ be   a  H\"older continuous function  on $\overline{\Omega}$ such that it is negative, plurisubharmonic in $\Omega$ and satisfies 
$$
\rho=0 \text{ on } \partial \Omega.
$$
Let $z\in \overline{\Omega}$ and define
$$
\rho_z(t) := \max( \rho(t), \|t-z\|^2- r_z^2), \ t\in \overline \Omega.
$$
Then, $\rho_z$ is H\"older continuous on $\overline{\Omega}$ and  plurisubharmonic on $\Omega$. This implies that $\rho_z$ is a negative  sub-solution of $\mathcal M ( \Omega \cap \mathbb B(z, r_z), 0)$.  
Let $u_z$ is a solution of $\mathcal M ( \Omega \cap \mathbb B(z, r_z), \mu)$.
The maximum principle tells us that  
$$
u_z\leq 0 \text{ on }  \Omega \cap \mathbb B(z, r_z),
$$
and hence, 
$$\varphi_z:= \rho_z+u_z$$ 
is a negative sub-solution of $\mathcal M ( \Omega \cap \mathbb B(z, r_z), \mu)$.
Theorem \ref{thm1} states that there exists a H\"older continuous function $\varphi $ on $\overline{\Omega}$ such that it is plurisubharmonic in $\Omega$ and satisfies 
$$(dd^c \varphi)^n \geq \mu \text{ on } \Omega.$$   
On the other hand, since $\Omega$ is strictly pseudoconvex, Theorem 1.2 in \cite{Li04b} shows that there is a H\"older continuous function $\psi$ on $\overline{\Omega}$ such that it is plurisubharmonic in $\Omega$ and 
$$
\psi=-\varphi \text{ on } \partial \Omega.
$$
It is easy to see that $\varphi+\psi$ is a sub-solution of $\mathcal M(\Omega, \mu)$, and therefore, we conclude by  Theorem B in \cite{Cuong20} that  $\mathcal M (\Omega, \mu)$ is solvable.  
The proof is complete. 
\end{proof}


\end{document}